\def\RR{\mathbb{R}}
\def\conv{\text{conv}}
\def\dK{\partial K}
\def\dL{\partial L}
\def\dQ{\partial Q}
\def\cF{\mathcal{F}}
\newtheorem{theorem}{Theorem}
\newtheorem*{theorem*}{Theorem}
\newtheorem*{lemma}{Lemma}
\newtheorem*{claim}{Claim}
\newtheorem*{remark}{Remark}
\newtheorem*{definition}{Definition}
\title{The (B) conjecture for uniform measures in the~plane}
\author{Amir Livne Bar-on}
\address{Tel Aviv University, Tel Aviv 69978, Israel}
\email{Amir Livne Bar-on <livnebaron@mail.tau.ac.il>}
\thanks{Supported in part by a grant from the European Research Council}
\keywords{B conjecture, uniform measure}
\begin{document}

\begin{abstract}
We prove that for any two centrally-symmetric convex shapes $K,L \subset \RR^2$,
the function $t \mapsto |e^t K \cap L|$ is log-concave.
This extends a result of Cordero-Erausquin, Fradelizi and Maurey in the two dimensional case.
Possible relaxations of the condition of symmetry are discussed.
\end{abstract}

\maketitle

\section{Introduction}
\label{sec:intro}

It was conjectured by Banaszcyk (see Lata{\l}a \cite{latala}) that for any convex set
$K \subset \RR^n$ that is centrally-symmetric (i.e., $K = -K$) and for a centered Gaussian measure $\gamma$,
\begin{equation}
\gamma(s^{1-\lambda} t^\lambda K) \ge \gamma(s K)^{1-\lambda} \gamma(t K)^\lambda
\label{eq:first}
\end{equation}
for any $\lambda \in [0,1]$ and $s,t > 0$.

This conjecture was proven in \cite{bconj}, in the equivalent form that the function
${t \mapsto \gamma(e^t K)}$
is log-concave.
The same paper raises the question whether (\ref{eq:first}) remains valid when $\gamma$
is replaced by other log-concave measures.
The proof of (\ref{eq:first}) for \emph{unconditional} sets and log-concave measures was given in
\cite{bconj} as well:
\begin{theorem}[\cite{bconj}, Proposition 9]
\label{thm:uncond}
Let $K \subset \RR^d$ be a convex set and let $\mu$ be a log-concave measure on $\RR^d$,
and assume that both are invariant under coordinate reflections.
Then $t \mapsto \mu(e^t K)$ is a log-concave function.
\end{theorem}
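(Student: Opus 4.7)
The plan is to exploit the unconditional symmetry to reduce to the positive orthant, then apply the exponential change of variables $x_i = e^{u_i}$, which turns dilations $e^t K$ into translations $\widetilde K + t\mathbf{1}$, and conclude via the Pr\'ekopa--Leindler inequality.

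Writing $d\mu = e^{-V(x)}\,dx$ with $V$ convex and unconditional, the $2^d$-fold coordinate symmetry gives
$$\mu(e^t K) = 2^d \int_{e^t K \cap \RR^d_+} e^{-V(x)}\,dx.$$
Substituting $x_i = e^{u_i + t}$ (so $dx = e^{\sum_i u_i + dt}\,du$) and then translating $v = u + t\mathbf{1}$, one obtains
$$\mu(e^t K) = 2^d \int_{\widetilde K + t\mathbf{1}} e^{-W(v)}\,dv,$$
where $\widetilde K := \{u \in \RR^d : (e^{u_1},\ldots,e^{u_d}) \in K\}$, $W(v) := V(e^{v_1},\ldots,e^{v_d}) - \sum_i v_i$, and $\mathbf{1} = (1,\ldots,1)$.

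The analytic crux is to verify that (i) $\widetilde K$ is convex, and (ii) $W$ is convex. For (i), unconditional convexity of $K$ implies that $K \cap \RR^d_+$ is coordinatewise decreasing: if $0 \le x \le y$ componentwise and $y \in K$, then $x$ is a convex combination of the $2^d$ sign-flipped copies of $y$, all of which lie in $K$ by unconditionality. Combined with the componentwise AM-GM inequality $(e^{u})^{\lambda}(e^{u'})^{1-\lambda} \le \lambda e^{u} + (1-\lambda) e^{u'}$, this shows geometric means of points in $K \cap \RR^d_+$ lie in $K$, i.e., $\widetilde K$ is convex. For (ii), the same monotonicity argument shows that $V$ restricted to $\RR^d_+$ is non-decreasing in each coordinate (since $V$ is convex and even in each variable, it attains its minimum on any coordinate axis at $0$), so AM-GM plus convexity of $V$ gives convexity of $v \mapsto V(e^v)$; subtracting the linear term $\sum_i v_i$ preserves convexity.

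Given (i) and (ii), log-concavity of $\mu(e^t K)$ is immediate: the function
$$h(v,t) := \mathbf{1}_{\widetilde K}(v - t\mathbf{1})\,e^{-W(v)}$$
is log-concave on $\RR^{d+1}$, since its support is the convex set $\{(v,t) : v - t\mathbf{1} \in \widetilde K\}$ and $W$ is convex, and Pr\'ekopa's theorem yields log-concavity of its marginal in $t$, which equals $2^{-d}\mu(e^t K)$. The main obstacle is checking (i) and (ii); once the exponential substitution has been justified, the rest is a standard application of Pr\'ekopa.
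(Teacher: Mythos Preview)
The paper does not prove this theorem; it is quoted as Proposition~9 of \cite{bconj} and used only as a black box in Section~\ref{sec:dihedral}. Your argument is correct and is in fact the proof given in \cite{bconj}: restrict to the positive orthant by unconditionality, pass to logarithmic coordinates $x_i = e^{v_i}$ so that the dilation $e^t K$ becomes the translate $\widetilde K + t\mathbf{1}$, check that both $\widetilde K$ and $W(v) = V(e^{v_1},\ldots,e^{v_d}) - \sum_i v_i$ are convex using coordinatewise monotonicity and AM--GM, and then apply Pr\'ekopa's marginal theorem. One small remark: writing $d\mu = e^{-V}\,dx$ tacitly assumes $\mu$ has a density with respect to Lebesgue measure; in general $V$ should be allowed to take the value $+\infty$ (or one reduces to the affine hull of the support), but your argument goes through verbatim in that generality.
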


This paper explores the situation in $\RR^2$.
To distinguish this special case, we call a convex set $K \subset \RR^2$, which is compact and has
a non-empty interior, a \emph{shape}.
The main result is
\begin{theorem}
\label{thm:main}
Let $K,L \subset \RR^2$ be centrally-symmetric convex shapes.
Then $${t \mapsto |e^t K \cap L|}$$ is a log-concave function.
\end{theorem}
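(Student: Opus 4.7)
The plan is to prove the midpoint log-concavity
$$|e^{(t_1+t_2)/2}K \cap L|^{2} \;\geq\; |e^{t_1}K \cap L|\cdot|e^{t_2}K \cap L|$$
for all $t_1,t_2 \in \RR$; since $f(t):=|e^{t}K\cap L|$ is strictly positive and continuous, this automatically upgrades to log-concavity of $\log f$. The key input is the logarithmic Brunn--Minkowski inequality in the plane for centrally symmetric convex bodies (B\"or\"oczky--Lutwak--Yang--Zhang): for centrally symmetric $A,B \subset \RR^{2}$ with support functions $h_A, h_B$, the body
$$M \;:=\; \bigcap_{u\in S^{1}}\bigl\{x\in\RR^{2}:\langle x,u\rangle\leq\sqrt{h_A(u)\,h_B(u)}\bigr\}$$
satisfies $|M|\geq\sqrt{|A|\cdot|B|}$.

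Given $t_1,t_2$, I would set $A := e^{t_1}K \cap L$ and $B := e^{t_2}K \cap L$; both are centrally symmetric convex bodies, as intersections of such. Let $M$ be their log-midpoint body, defined as above. The main geometric step is to show that $M \subseteq e^{(t_1+t_2)/2}K \cap L$. From $A \subseteq e^{t_1}K$ and $A \subseteq L$ one has $h_A \leq \min(e^{t_1}h_K,\,h_L)$ pointwise on $S^{1}$, and analogously $h_B \leq \min(e^{t_2}h_K,\,h_L)$. Combining these estimates via the elementary fact
$$\min(\alpha_1,\beta_1)\,\min(\alpha_2,\beta_2)\;\leq\;\min(\alpha_1\alpha_2,\,\beta_1\beta_2)\qquad (\alpha_i,\beta_i\geq 0)$$
yields $\sqrt{h_A(u)\,h_B(u)} \leq \min\bigl(e^{(t_1+t_2)/2}h_K(u),\,h_L(u)\bigr)$ for every $u\in S^{1}$, and the right-hand side is precisely the function whose Wulff shape (half-plane intersection) is $e^{(t_1+t_2)/2}K \cap L$. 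Monotonicity of Wulff shapes in their defining functions then gives the inclusion.

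Combining the inclusion with the log-Brunn--Minkowski inequality furnishes $|e^{(t_1+t_2)/2}K \cap L|\geq|M|\geq\sqrt{|A||B|}$, which is exactly the midpoint log-concavity of $f$, whence full log-concavity by continuity. The hard part is the invocation of log-Brunn--Minkowski; this is the essentially two-dimensional ingredient of the argument, since the log-Brunn--Minkowski inequality for general centrally symmetric convex bodies is at present known only in $\RR^{2}$, and it is precisely this restriction that forces the theorem to the plane.
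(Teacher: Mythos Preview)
Your argument is correct: the inclusion $M\subseteq e^{(t_1+t_2)/2}K\cap L$ follows exactly as you say, since the Wulff shape of $\min(h_{e^{s}K},h_L)$ is $e^{s}K\cap L$ and Wulff shapes are monotone in their defining function; the volume bound $|M|\ge\sqrt{|A|\,|B|}$ is precisely the planar log--Brunn--Minkowski inequality of B\"or\"oczky--Lutwak--Yang--Zhang for origin-symmetric bodies, and since $A,B$ both contain a neighbourhood of $0$ there is no degeneracy. Midpoint log-concavity plus continuity then gives the full statement.

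This is, however, a completely different proof from the paper's. The paper proceeds by an elementary, self-contained route: approximate $K,L$ by transversely intersecting polygons, differentiate $f_{K,L}$ twice to obtain a pointwise inequality $|K\cap L|\,[g_{K,L}(1)+g_{K,L}'(1)]\le g_{K,L}(1)^2$, and then reduce this inequality---via a decomposition of $\partial K\cap L$ into arcs and the symmetry $B(K,L)\Leftrightarrow B(L,K)$---to the case where both bodies are parallelograms, hence (after a linear map) to a unit square against a parallelogram; two explicit planar computations finish the job. Your approach is much shorter and transparently explains \emph{why} the result is currently confined to $\RR^2$: it is exactly the range in which log--Brunn--Minkowski is known. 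It would also extend immediately to higher dimensions the moment log--BM is established there. The price is that you import a nontrivial external theorem, whereas the paper's argument uses nothing beyond calculus and elementary plane geometry; it is longer but entirely self-contained, and its boundary-decomposition technique is of independent interest.
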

Here $|\cdot|$ is the Lebesgue measure, so Theorem~\ref{thm:main} is an analog of (\ref{eq:first})
for \emph{uniform} measures -- with density $d\mu(x) = \mathbf{1}_L(x) dx$.
Note that a uniform measure on a set is log-concave if and only if the set is convex.

The condition of central symmetry in Theorem~\ref{thm:main} can be replaced by dihedral symmetry.
For an integer $n \ge 2$, let $D_n$ be the group of symmetries of $\RR^2$ that is generated by two
reflections, one across the axis $Span\{(1,0)\}$ and the other across the axis
$Span\{(\cos\frac{\pi}{n}, \sin\frac{\pi}{n})\}$.
The dihedral group $D_n$ contains $2n$ transformations.
A $D_n$-symmetric shape $A \subset \RR^2$ is one invariant under the action of $D_n$.

\begin{theorem}
\label{thm:dihedral}
Let $n \ge 2$ be an integer, and let $K,L \subset \RR^2$ be $D_n$-symmetric convex shapes.
Then $t \mapsto |e^t K \cap L|$ is a log-concave function.
\end{theorem}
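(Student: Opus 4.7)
The plan is to adapt the proof of Theorem~\ref{thm:main} by replacing the antipodal pairing of boundary arcs with averaging over $D_n$-orbits. Two preliminary observations simplify the problem: when $n$ is even, $R_\pi \in D_n$, so every $D_n$-symmetric shape is centrally symmetric and Theorem~\ref{thm:main} applies directly; the case $n=2$ is moreover covered by Theorem~\ref{thm:uncond}. Thus the substantive case is $n \ge 3$ odd, for which $-I \notin D_n$ but $D_n$ still acts irreducibly on $\RR^2$.

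Set $f(t) = |e^t K \cap L|$. Log-concavity of $f$ is equivalent to the pointwise inequality $f(t)\,f''(t) \le f'(t)^2$ at generic $t$, and by replacing $K$ with $e^{t_0}K$ it suffices to check it at $t=0$. Following the proof of Theorem~\ref{thm:main}, I would first express $f'(0)$ and $f''(0)$ as integrals over the arcs of $\Gamma = \partial K \cap \mathrm{int}(L)$, with integrands involving the support function $h_K$, together with boundary terms at the finitely many crossings $\partial K \cap \partial L$ involving the curvatures of $\partial K$ and $\partial L$. The desired inequality then decomposes as a sum of local contributions indexed by the arcs of $\Gamma$, which under $D_n$-symmetry are grouped into $D_n$-orbits of size dividing $2n$.

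The key step is to prove the local inequality for each orbit. Where the proof of Theorem~\ref{thm:main} uses the trivial identity $v \otimes v + (-v) \otimes (-v) = 2\,v \otimes v$ on the outward normal of a single arc, for $D_n$-symmetric shapes I would use the isotropy identity
\[
\sum_{g \in D_n}(gv) \otimes (gv) \;=\; n\,|v|^2\,I \qquad (v \in \RR^2),
\]
valid for $n \ge 3$ by Schur's lemma applied to the irreducible standard representation of $D_n$. Combining this with the analogous averaging of arc-length, support-function, and curvature data over the $2n$-element orbit should yield the required local inequality; summing over all $D_n$-orbits then gives $f(0)\,f''(0) \le f'(0)^2$.

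The main obstacle I anticipate is the geometric bookkeeping in the orbit-averaging step: the $2n$ arcs of a $D_n$-orbit lie in different sectors of the plane and meet $\partial L$ along different normals and curvatures, so one must verify that the cross-terms produced by averaging are controlled by the convexity and $D_n$-symmetry of $L$. In the centrally-symmetric case the antipodal image of an arc interacts with $\partial L$ in a manner that is forced by the same convexity constraint as the original arc, whereas for $D_n$ the $2n$ arcs produce independent-looking constraints that must be shown to align. Once this local estimate is established, the global log-concavity of $f$ follows by summation.
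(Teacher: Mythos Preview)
Your proposal diverges from the paper's proof in a fundamental way, and the route you sketch carries a real gap. The paper does not adapt the machinery of Theorem~\ref{thm:main} to $D_n$-orbits at all. Instead, for $n\ge 3$ it reduces directly to the \emph{unconditional} case (Theorem~\ref{thm:uncond}) via an angular rescaling: approximate $K,L$ by smooth strongly-convex $D_n$-symmetric shapes, then define $w(K)$ by $\rho_{w(K)}(\theta)=\rho_K(\tfrac{2}{n}\theta)$. A one-line check of the curvature condition $\rho^2+2(\rho')^2-\rho\rho''>0$ shows $w(K)$ is again convex (this uses $1-\tfrac{4}{n^2}\ge 0$). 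The underlying point map has constant Jacobian $\tfrac{n}{2}$ on the fundamental sector, so $f_{K,L}=f_{w(K),w(L)}$, and since $w(K),w(L)$ are $D_2$-symmetric (unconditional), Theorem~\ref{thm:uncond} from \cite{bconj} finishes the argument.

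Your plan rests on a misreading of how Theorem~\ref{thm:main} is actually proved: the paper never uses a tensor identity of the form $v\otimes v+(-v)\otimes(-v)=2\,v\otimes v$. That proof reduces to transversally-intersecting polygons, then for each antipodal pair $S_i,S_{i+n}$ of boundary arcs constructs an explicit pair $(K^{(i)},L^{(i)})$ in which $L^{(i)}$ is a \emph{parallelogram}, and finally settles two square-versus-parallelogram configurations by direct calculation. This reduction depends essentially on having exactly two arcs per orbit: extending two opposite arcs yields a convex strip, and the two crossing edges of $L$ and their antipodes yield a parallelogram. With $2n$ arcs in a $D_n$-orbit the analogous construction produces $2n$-gons, for which no such finite casework is available. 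The Schur-type isotropy identity you invoke does not substitute for this reduction, and the ``geometric bookkeeping'' obstacle you flag is exactly the missing idea, not a detail. The paper's angular-rescaling trick sidesteps the whole issue.
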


\subsection*{Examples and open questions}

For what sets and measures is (\ref{eq:first}) valid?

The (B)-conjecture, or (\ref{eq:first}), is not necessarily true for measures and sets with just
one axis of symmetry in $\RR^2$.
An example with a log-concave uniform measure is
\begin{align*}
L & = \conv \left\{ (-5,-2) , (0,3) , (5,-2) \right\} \\
K & = [-6, 6] \times [-3, 1]
\end{align*}
The function $t \mapsto |e^t K \cap L|$ is not log-concave in a neighbourhood of $t = 0$.

Another negative result is for quasi-concave measures.
These are measures with density $d\mu(x) = \varphi(x) dx$ satisfying
$\varphi( (1-\lambda) x + \lambda y ) \ge \max\{\varphi(x), \varphi(y)\}$ for all $0 \le \lambda \le 1$.
If
$$ \mu(A) = |A \cap Q | + |A|, \qquad Q = [-1, 1] \times [-1, 1] $$
then the corresponding function $t \mapsto \mu(e^t Q)$ is not log-concave in a neighbourhood of $t = 1$.

The (B)-conjecture for general centrally-symmetric log-concave measures is not settled yet, even in
two dimensions.
It is also of interest to generalize the method of this paper to higher dimensions.

\subsection*{Notation}

For a convex shape $K \subset \RR^2$, its boundary is denoted by $\dK$.
The support function is denoted $h_K(x) = \sup_{y \in K} \langle x, y \rangle$.
The normal map $\nu_K : \dK \to S^1$ is defined for all smooth points on the boundary,
and $\nu_K(p)$ is the unique direction that satisfies
$\langle\nu_K(p),x\rangle = h_K(x)$.
We denote the unit square by $Q = [-1,1] \times [-1,1] = B^2_{\infty}$.
The Hausdorff distance between sets $A, B \subset \RR^n$ is defined as
$d_H(A,B) = \max \{ \sup_{a \in A} d(a, B), \sup_{b \in B} d(b, A) \}$.
The radial function $\rho_K : \RR \to \RR$ of a convex shape $K \subset \RR^2$ is
$\rho_K(\theta) = \max \{ r \in \RR : (r \cos \theta, r \sin \theta) \in K \}$, with period $2 \pi$.

\section{Main result}
\label{sec:main}

This section proves Theorem~\ref{thm:main}:
\begin{theorem*}
Let $K,L \subset \RR^2$ be centrally-symmetric convex shapes.
Then the function $f_{K,L}(t) = |e^t K \cap L|$ is log-concave.
\end{theorem*}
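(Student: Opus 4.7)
The plan is to establish log-concavity by a direct second-derivative calculation, that is, to verify $f_{K,L}(t)\, f_{K,L}''(t) \le f_{K,L}'(t)^2$ at almost every $t$.

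\textbf{Reduction and first derivative.} By continuity of the area under Hausdorff convergence, I would first approximate $K$ and $L$ by strictly convex $C^2$ centrally-symmetric shapes. In that case, $\partial(e^t K)$ meets $\partial L$ transversally in finitely many points for all but discretely many $t$. At a generic $t$, the boundary of $e^t K \cap L$ decomposes into arcs lying alternately on $\partial(e^t K)$ and on $\partial L$, meeting at these intersection points; by central symmetry they come in antipodal pairs. Since $\partial L$ is stationary and $\partial(e^t K)$ moves outward with normal velocity $\langle p,\nu_K(p)\rangle$, Reynolds' transport formula gives
\[
  f_{K,L}'(t) \;=\; \int_{\partial(e^tK)\cap L} \langle p,\nu_K(p)\rangle\, d\sigma(p).
\]

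\textbf{Second derivative.} Differentiating once more, the radial scaling of the integrand and length element contributes a bulk term equal to $2 f_{K,L}'(t)$, while the sliding of each intersection point $p_i(t)$ along $\partial L$ contributes a negative endpoint term. The implicit function theorem expresses the endpoint speed in terms of the angle at which $\partial(e^tK)$ crosses $\partial L$, yielding a formula of the shape
\[
  f_{K,L}''(t) \;=\; 2 f_{K,L}'(t) \;-\; \Sigma(t),
\]
with $\Sigma(t)\ge 0$ an explicit sum over the intersection points.

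\textbf{Reducing to a key inequality.} Letting $B(t) = \int_{\partial L \cap e^tK}\langle p,\nu_L(p)\rangle\, d\sigma$ be the analogous boundary integral over the $\partial L$-arcs of $\partial(e^tK\cap L)$, the divergence theorem applied to the vector field $p/2$ on $e^tK\cap L$ gives $2 f_{K,L}(t) = f_{K,L}'(t) + B(t)$. Rearranging, $ff''\le (f')^2$ reduces to
\[
  f_{K,L}(t)\cdot\Sigma(t)\;\ge\;f_{K,L}'(t)\cdot B(t).
\]
I would attack this by breaking both sides into contributions indexed by the arcs and endpoints, applying Cauchy--Schwarz arc-by-arc, and then pairing antipodal arcs via the central symmetry to assemble the local estimates into the global bound.

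\textbf{Main obstacle.} The crux is the last inequality $f\cdot\Sigma\ge f'\cdot B$. A naive arc-by-arc Cauchy--Schwarz is generally too weak; the central symmetry is used essentially to pair arcs against each other and sharpen the bound. The one-axis-of-symmetry counterexample in the introduction shows that this pairing, and hence the inequality, fails without central symmetry. Further care is needed at the non-generic times $t_0$ where two intersection points collide and the number of arcs drops by two; at such $t_0$, the derivative $f_{K,L}'$ may jump and log-concavity must be verified via a one-sided comparison $f_{K,L}'(t_0-)\ge f_{K,L}'(t_0+)$.
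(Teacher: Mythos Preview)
Your derivative computations and the reduction are correct and parallel the paper's opening: your $f'$ is the paper's $g(1)$, your identity $f''=2f'-\Sigma$ with $\Sigma\ge0$ holds, and combining it with the divergence identity $2f=f'+B$ yields exactly the paper's condition $|K\cap L|\,(g(1)+g'(1))\le g(1)^2$, rewritten as $f\,\Sigma\ge f'B$. (The paper works with transverse polygons rather than smooth bodies, but this is immaterial.)

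The genuine gap is precisely where you place it: you have not proved $f\,\Sigma\ge f'B$, and ``Cauchy--Schwarz arc-by-arc, then pair antipodal arcs'' is too vague to be an argument. There is no evident quadratic-form structure here: $f$ is an area, $f'$ and $B$ are integrals over two disjoint arc systems, and $\Sigma$ is a finite sum of endpoint terms of the shape $h_K(\nu_K(p))\,h_L(\nu_L(p))/|\sin\theta_p|$; it is not clear what vectors you intend to pair. The paper does not prove this inequality by any soft method. Instead it (i) decomposes $\partial K\cap L$ into antipodal pairs $S_i\cup(-S_i)$ and, for each $i$, \emph{enlarges} $K,L$ to $K^{(i)}\supset K$, $L^{(i)}\supset L$ with $\partial K^{(i)}\cap L^{(i)}=S_i\cup(-S_i)$ and $L^{(i)}$ a parallelogram; since $g(1)$ and $g(1)+g'(1)$ split additively over $i$ while $|K\cap L|\le|K^{(i)}\cap L^{(i)}|$, the inequality for all $(K^{(i)},L^{(i)})$ implies it for $(K,L)$ via $\sum_i a_i^2\le(\sum_i a_i)^2$; (ii) uses $f_{K,L}(t)=e^{2t}f_{L,K}(-t)$ to iterate and make \emph{both} shapes parallelograms; (iii) normalizes one to the unit square by a linear map; and (iv) verifies two explicit polynomial inequalities, each by a page of computation. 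Step (iv) is delicate---in one case the paper must first optimize over an auxiliary angle and then check positivity of a quadratic in the area variable---which strongly suggests the key inequality is not a formal consequence of Cauchy--Schwarz plus central symmetry. Your concern about non-generic times is handled in the paper simply by density: one only needs the second-derivative inequality at $t=0$ for a Hausdorff-dense class of pairs.
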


Obviously, it suffices to show log-concavity around $t=0$.

If we consider the space of centrally-symmetric convex shapes in the plane, equipped with the Hausdorff metric $d_H$,
then the operations $K, L \mapsto K \cap L$ and $K \mapsto |K|$ are continuous.
This means that the correspondence $K, L \mapsto f_{K,L}$ is continuous as well.
Since the condition of log-concavity in the vicinity of a point is a closed condition in the space
$C(\RR)$ of bounded continuous functions,
the class of pairs of centrally-symmetric shapes $K, L \subset \RR^2$ for which $f_{K,L}(t)$ is log-concave
near $t = 0$ is closed w.r.t Hausdorff distance.
Thus in order to prove Theorem~\ref{thm:main} it suffices to prove that $f_{K,L}(t)$ is a log-concave function
near $t = 0$ for a dense set in the space of pairs of centrally-symmetric convex shapes.

As a dense subset, we shall pick the class of transversely-intersecting convex polygons.
This class will be denoted by $\cF$.
The elements of $\cF$ are pairs $(K,L)$ of shapes $K,L \subset \RR^2$ that satisfy:
\begin{itemize}
\item The sets $(K,L)$ are centrally-symmetric convex polygons in $\RR^2$.
\item The intersection $\dK \cap \dL$ is finite.
\item None of the points $x \in \dK \cap \dL$ are vertices of $K$ or of $L$.
That is, there is some $\varepsilon > 0$ such that $B(x,\varepsilon) \cap \dK$ and
$B(x,\varepsilon) \cap \dL$ are line segments.
\item For every $x \in \dK \cap \dL$, $\nu_K(x) \neq \nu_L(x)$.
\end{itemize}

\begin{claim}
The class $\cF$ is dense in the space of pairs of centrally-symmetric convex shapes
(with respect to the Hausdorff metric).
\end{claim}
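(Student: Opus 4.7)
The plan is to apply two successive perturbations. First, I would approximate $K$ and $L$ separately by centrally-symmetric convex polygons in Hausdorff distance: take the convex hull of a symmetric sample $\{\pm p_1,\dots,\pm p_n\}$ of boundary points, and as the sample becomes dense this polygon converges to the original shape. So for any $\varepsilon>0$, I may assume from the outset that $K$ and $L$ are already centrally-symmetric convex polygons.

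The pair $(K,L)$ still need not belong to $\cF$: two edges might be collinear, the normals might coincide at a crossing, or a vertex of one polygon might lie on an edge of the other. The key idea is to perturb $L$ by a small rotation about the origin, setting $L_\theta := R_\theta L$. This preserves central symmetry (unlike, say, an off-center translation or a shear) and moves $L$ by an arbitrarily small Hausdorff distance as $\theta\to 0$.

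The core step is to show that $(K,L_\theta)\in\cF$ for all but finitely many $\theta$ in any small neighbourhood of $0$. Let $N_K,N_L\subset S^1$ be the finite sets of outer normal directions of the edges of $K$ and $L$. The edges of $L_\theta$ have normals $R_\theta N_L$, so the condition $N_K\cap R_\theta N_L\neq\emptyset$ excludes only finitely many $\theta$; outside this exceptional set no edge of $K$ is parallel to an edge of $L_\theta$, which forces $\dK\cap\partial L_\theta$ to be finite and $\nu_K\neq\nu_{L_\theta}$ at every crossing. Separately, for each vertex $v$ of $K$ the condition $v\in\partial L_\theta$ is equivalent to $R_{-\theta}v\in\dL$; as $\theta$ varies, $R_{-\theta}v$ traces a circular arc meeting the polygonal boundary $\dL$ in finitely many points. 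Summing over the finitely many vertices of both polygons, only finitely many $\theta$ in a bounded interval are bad, and picking any small $\theta$ outside this set produces the required element of $\cF$ within Hausdorff distance $\varepsilon$ of $(K,L)$.

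The only real obstacle is finding a perturbation that preserves central symmetry while being rich enough to break every degeneracy simultaneously; rotations about the origin form the natural one-parameter symmetry-preserving family, and once chosen, the remainder of the argument reduces to elementary combinatorics of polygons, lines and circles.
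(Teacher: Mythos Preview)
The paper states this claim without proof, treating it as routine, so there is nothing to compare against; your argument correctly supplies the omitted details. The two-step strategy---first approximate each shape by a centrally-symmetric polygon, then rotate $L$ about the origin to destroy all degeneracies---is natural, and rotation is exactly the right one-parameter family since it preserves central symmetry while acting nontrivially on normal directions and on vertex positions.

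One small point is worth making explicit. When you pass from $N_K\cap R_\theta N_L=\emptyset$ to ``no edge of $K$ is parallel to an edge of $L_\theta$'', note that parallel edges may have \emph{opposite} outer normals, not just equal ones; your condition rules out both cases only because central symmetry of $K$ forces $N_K=-N_K$. With that observation the rest follows as you say: non-parallel edges meet in at most one point, giving finiteness of $\dK\cap\partial L_\theta$ and $\nu_K\neq\nu_{L_\theta}$ at each crossing, while the vertex conditions exclude only finitely many $\theta$ since a circle of positive radius meets each edge of a polygon in at most two points.
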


Hence, in order to prove Theorem~\ref{thm:main}, it is enough to consider polygons with transversal intersection.

\subsection*{Deriving a concrete inequality}
\begin{lemma}
If $(K,L) \in \cF$, then $f_{K,L}(t)$ is twice differentiable in some neighbourhood of $t=0$.
\end{lemma}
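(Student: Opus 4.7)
\medskip

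\noindent\textbf{Proof plan.}
The plan is to show that, for $t$ sufficiently close to $0$, the intersection $e^t K \cap L$ is a polygon whose combinatorial type is constant in $t$ and whose vertices depend smoothly (in fact, real-analytically) on $t$. Once this is established, the area is a polynomial in those vertex coordinates via the shoelace formula, hence $f_{K,L}$ is $C^\infty$ near $t=0$, so in particular twice differentiable.

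The first step is to classify the vertices of $e^t K \cap L$ at $t=0$. Each vertex falls into exactly one of three types:
\begin{itemize}
\item a vertex $v$ of $K$ lying in the interior of $L$;
\item a vertex $w$ of $L$ lying in the interior of $K$;
\item a transverse boundary intersection point $x \in \dK \cap \dL$.
\end{itemize}
The definition of $\cF$ guarantees that these three classes are disjoint (no intersection point is a vertex of either polygon) and that only finitely many points are involved. By continuity, for $|t|$ small enough, every vertex $v$ of $K$ that is interior to $L$ at $t=0$ yields a point $e^t v$ still interior to $L$; every vertex $w$ of $L$ interior to $K$ at $t=0$ is still interior to $e^t K$; and no new incidences appear between vertices and opposite boundary edges. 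Thus in a neighbourhood of $0$ the combinatorial structure of $e^t K \cap L$ is fixed.

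Next I would track each vertex as a function of $t$. The first two types are trivial: $e^t v$ is analytic in $t$, and $w$ is constant in $t$. For the third type, near $x \in \dK \cap \dL$ the boundary $\dK$ is a line segment on the line $\{y : \langle \nu_K(x), y\rangle = h_K(\nu_K(x))\}$, so $\partial(e^t K)$ locally lies on the parallel line $\{y : \langle \nu_K(x), y\rangle = e^t h_K(\nu_K(x))\}$. Intersecting this with the corresponding fixed line for $\dL$ gives a $2\times 2$ linear system whose determinant is nonzero because $\nu_K(x) \ne \nu_L(x)$. Cramer's rule (or the implicit function theorem) then produces a real-analytic curve $t \mapsto x(t)$ of intersection points, defined for all small $t$.

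The main (very mild) obstacle is the bookkeeping of the previous paragraph: one must confirm that the moving intersection point $x(t)$ stays inside the \emph{edges} it came from, and not beyond their endpoints, for $|t|$ small. This follows from the non-coincidence of intersection points with vertices, since the endpoints are at positive distance from $x(0)$ while $x(t)\to x(0)$ continuously. Once all vertices of $e^t K \cap L$ are parametrized analytically and listed in a fixed cyclic order, applying the shoelace formula expresses $f_{K,L}(t)$ as a finite polynomial in those coordinates, establishing analyticity (and hence twice differentiability) in a neighbourhood of $t = 0$.
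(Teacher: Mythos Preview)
Your argument is correct and in fact proves more than required (real-analyticity rather than just $C^2$). The classification of vertices is exhaustive because the conditions defining $\cF$ force every vertex of $K$ to lie either in the interior of $L$ or outside $L$ (and symmetrically), while the finiteness of $\dK\cap\dL$ together with the non-vertex condition rules out the degenerate case $\nu_K(x)=-\nu_L(x)$, so the $2\times2$ system is indeed nonsingular.

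However, your route differs from the paper's. The paper does not track the polygon $e^tK\cap L$ vertex by vertex; instead it writes the area as a radial integral
\[
|aK\cap L|=\int_0^a\!\!\int_{x\in r\,\dK\cap L} h_K\!\big(\nu_K(\tfrac{x}{r})\big)\,d\ell\,dr,
\]
calls the inner integral $g_{K,L}(r)$, and argues that transversality makes $g_{K,L}$ continuous (giving one derivative) and then differentiable (giving the second), since $r\,\dK\cap L$ is a finite union of segments whose endpoints vary smoothly in $r$. Your shoelace approach is more elementary and yields the stronger conclusion, but the paper's approach has a purpose: the function $g_{K,L}$ it introduces is precisely the object used in the rest of the argument, where the log-concavity condition is rewritten as $|K\cap L|\,(g_{K,L}(1)+g_{K,L}'(1))\le g_{K,L}(1)^2$ and then decomposed over connected components of $\dK\cap L$. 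So while your proof of the lemma stands on its own, be aware that you would still need to recover $g_{K,L}$ and its properties for the subsequent steps.
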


\begin{remark}
In this case, log-concavity around $t=0$ amounts to the inequality
\begin{align}
\left. \frac{d^2}{dt^2} \log f(t) \right| _{t=0} & \le 0 \notag \\
f(0) \cdot f''(0) & \le f'(0)^2 . \label{eq:derivs-f}
\end{align}
\end{remark}

\begin{proof}
The area of the intersection is
$$ |a K \cap L| = \int _0^a dr \int _{x \in r \dK \cap L} h_K(\nu_K(\tfrac{x}{r})) d\ell , $$
where $d\ell$ is the length element.

Denote
$$ g_{K,L}(r) = \int _{x \in r \dK \cap L} h_K(\nu_K(\tfrac{x}{r})) d\ell . $$
The transversality of the intersection implies that $g_{K,L}(r)$ is continuous near $r=1$.
Therefore $a \mapsto |a K \cap L|$ is continuously differentiable near $a=1$.

The contour $r \dK \cap L$ is a finite union of segments in $\RR^2$.
Transversality implies that the number of connected components does not change with $r$ in a small
neighbourhood of $r=1$.
The beginning and end points of each component are smooth functions of $r$, also in some neighbourhood of $r=1$.
Therefore $g_{K,L}(r)$ is differentiable as claimed.
\end{proof}

Note that in such a neighbourhood of $r=1$, the function $g_{K,L}(r)$ only depends on the parts of $K$ and $L$
that are close to $\dK \cap L$, and is in fact a sum of contributions from each of the connected components.

Writing (\ref{eq:derivs-f}) in terms of $g(r)$, we get the following condition:

\begin{definition}
For convex shapes $(K, L) \in \cF$, we say that $K$ and $L$ satisfy property $B$, or that $B(K, L)$, if
\begin{align}
|K \cap L| \cdot [g_{K,L}(1) + g_{K,L}'(1)] \le g_{K,L}(1)^2 . \label{eq:derivs}
\end{align}
\end{definition}

The set $\cF$ is open with respect to the Hausdorff metric, and in particular,
if $(K, L) \in \cF$ then $(K, r L) \in \cF$ for every $r$ in some neighbourhood of $r = 1$.
If $B(K, r L)$ holds for every $r$ in such a neighbourhood, then $f_{K, L}(t)$ is log-concave in
some neighbourhood of $t = 0$, as
$$ f_{K, L}(t_0 + t) = e^{2 t_0} f_{K, e^{-t_0} L}(t) . $$

Therefore verifying (\ref{eq:derivs}) for all pairs $(K,L) \in \cF$ will prove Theorem~\ref{thm:main}.

\subsection*{Reduction to parallelograms}
Given two polygons $(K,L) \in \cF$,
the intersection $\dK \cap L$ consists of a finite number of connected components.
Due to central symmetry, they come in opposite pairs.
We denote these components by $S_1, \ldots, S_{2n}$, and ${S_{i+n} = \{ -x : x \in S_i \}}$.

We define a pair of convex shapes $K^{(i)}, L^{(i)}$ for each $1 \le i \le n$ via the following properties.
\begin{itemize}
\item
The shape $K^{(i)}$ is the largest convex set whose boundary contains $S_i \cup S_{i+n}$.
Equivalently, denoting by $x_1,x_2$ the endpoints of $S_i$, and by $x$ the solution of the equations
$$ \begin{cases}
  \langle \nu_K(x_1), x \rangle = h_K(\nu_K(x_1)) \\
  \langle \nu_K(x_2), x \rangle = -h_K(\nu_K(x_2))
\end{cases} $$
$K^{(i)} = \conv \left( S_i \cup S_{i+n} \cup \{x, -x\} \right)$.

\item
The shape $L^{(i)}$ is the parallelogram defined by the four lines
$$ \langle \nu_L(x_1), x \rangle = \pm h_L(\nu_L(x_1)) \quad , \quad \langle \nu_L(x_2), x \rangle = \pm h_L(\nu_L(x_2)) $$
\end{itemize}
See Figure~\ref{fig:extension} for examples.

If $S_i$ is a segment then $K^{(i)}$ described above is an infinite strip,
and if $\nu_L(x_1) = \nu_L(x_2)$ then $L^{(i)}$ is an infinite strip.
We would like to work with compact shapes, thus we apply a procedure to modify $K^{(i)}, L^{(i)}$
to become bounded without changing their significant properties.
Transversality implies that the intersection $K^{(i)} \cap L^{(i)}$ is bounded, even if both sets are strips.
For each $1 \le i \le n$ we pick a centrally-symmetric strip $A \subset \RR^2$ such that $A \cap K^{(i)}$
and $A \cap L^{(i)}$ are both bounded, and which contains $K$ and $L$, and whichever of
$K^{(i)},L^{(i)}$ that is bounded.
From now on we replace $K^{(i)}$ and $L^{(i)}$ by their intersection with $A$.

\begin{figure}[h!]
\includegraphics[width=\textwidth]{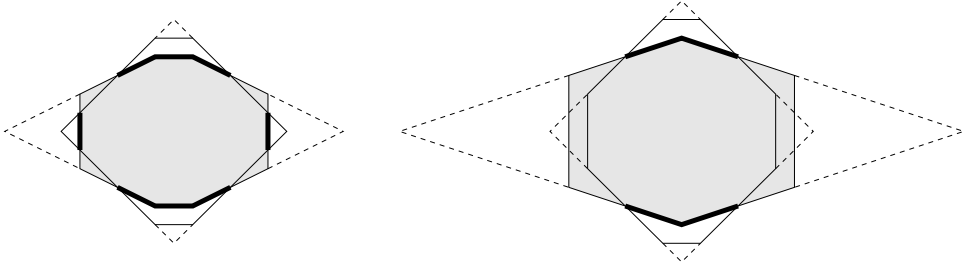}
\caption{Two examples of the extension $K, L \implies K^{(1)}, L^{(1)}$.
\small The shaded shape in each diagram is $K$ and the white shape with a solid boundary line is the corresponding $L$.}
\label{fig:extension}
\end{figure}

\begin{remark}
Note that the sets grow in the process: $K \subset K^{(i)}$ and $L \subset L^{(i)}$ for all $i = 1 \ldots n$.
They satisfy $\dK^{(i)} \cap L^{(i)} = S_i \cup S_{i+n}$.
Also note that if $K$ is a parallelogram then so are the $K^{(i)}$, for every $i$.
It is trivial to check that $(K^{(i)},L^{(i)}) \in \cF$ when $(K,L) \in \cF$.
\end{remark}

\begin{lemma}
If $B(K^{(i)}, L^{(i)})$ for all $i = 1 \ldots n$, then $B(K,L)$.
\end{lemma}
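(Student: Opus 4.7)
My plan is to reduce $B(K,L)$ to the hypothesized $B(K^{(i)},L^{(i)})$'s term by term, using two simple inputs. The first is an additive decomposition of $g_{K,L}$ near $r=1$: as the remark preceding the lemma observes, $g_{K,L}(r)$ is a sum of contributions from the connected components of $r\dK\cap L$. Near each $S_i$ the pair $(K,L)$ is indistinguishable from $(K^{(i)},L^{(i)})$ (by construction, $K$ and $K^{(i)}$ share an arc and the same normals near $S_i$, and $L$ agrees with $L^{(i)}$ in neighborhoods of the endpoints of $S_i$), and similarly near $S_{i+n}$. So, for $r$ in some neighborhood of $1$, the joint contribution of the arcs near $S_i$ and $S_{i+n}$ to $g_{K,L}(r)$ equals the full $g_{K^{(i)},L^{(i)}}(r)$, and therefore
\[
g_{K,L}(1)=\sum_{i=1}^{n}g_{K^{(i)},L^{(i)}}(1),\qquad g_{K,L}'(1)=\sum_{i=1}^{n}g_{K^{(i)},L^{(i)}}'(1).
\]
The second input is the containment $K\cap L\subseteq K^{(i)}\cap L^{(i)}$, which follows from $K\subseteq K^{(i)}$ and $L\subseteq L^{(i)}$; these inclusions survive the truncation by the ambient strip $A$, since $A$ was chosen to contain both $K$ and $L$. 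In particular $|K\cap L|\le|K^{(i)}\cap L^{(i)}|$ for every $i$.

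Write $a_i=g_{K^{(i)},L^{(i)}}(1)\ge 0$ and $b_i=g_{K^{(i)},L^{(i)}}(1)+g_{K^{(i)},L^{(i)}}'(1)$. The hypothesis $B(K^{(i)},L^{(i)})$ reads $|K^{(i)}\cap L^{(i)}|\cdot b_i\le a_i^2$, and the goal (\ref{eq:derivs}) becomes
\[
|K\cap L|\cdot\sum_{i=1}^n b_i\;\le\;\Bigl(\sum_{i=1}^n a_i\Bigr)^2.
\]
I would establish the per-index inequality $|K\cap L|\cdot b_i\le a_i^2$ by splitting on the sign of $b_i$. If $b_i\ge 0$, monotonicity of area followed by the hypothesis gives $|K\cap L|\cdot b_i\le|K^{(i)}\cap L^{(i)}|\cdot b_i\le a_i^2$. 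If $b_i<0$, the left-hand side is nonpositive while $a_i^2\ge 0$, so the inequality is trivial. Summing in $i$ yields $|K\cap L|\sum_i b_i\le\sum_i a_i^2\le\bigl(\sum_i a_i\bigr)^2$, the last inequality because every $a_i$ is nonnegative. This is exactly $B(K,L)$.

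The step I would handle most carefully is the additive decomposition: one must verify that for $r$ in a two-sided neighborhood of $1$ the components of $r\dK\cap L$ near $S_i$ and $S_{i+n}$ literally coincide with the two components of $r\dK^{(i)}\cap L^{(i)}$, so that the integrands and the endpoints of integration match simultaneously. Once this is in hand, the finish is the elementary chain above; the slack in $\sum_i a_i^2\le(\sum_i a_i)^2$ is harmless, and indeed expected, since the $n$ pointwise hypotheses $B(K^{(i)},L^{(i)})$ carry strictly more information than the single inequality $B(K,L)$ we need to derive.
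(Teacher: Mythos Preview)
Your proposal is correct and follows essentially the same approach as the paper: use the additive decomposition $g_{K,L}=\sum_i g_{K^{(i)},L^{(i)}}$ near $r=1$, the area inequality $|K\cap L|\le |K^{(i)}\cap L^{(i)}|$, the hypotheses $B(K^{(i)},L^{(i)})$, and then $\sum a_i^2\le(\sum a_i)^2$. Your explicit case split on the sign of $b_i$ is in fact a small improvement over the paper's chain, which silently passes from $|K\cap L|\sum_i b_i$ to $\sum_i |K^{(i)}\cap L^{(i)}|\,b_i$ as if every $b_i\ge 0$; your argument covers the negative case cleanly.
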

\begin{proof}
The function $g_{K,L}(r)$ takes non-negative values for $r>0$.
In addition, its value is the sum of contributions from the different connected components of ${r\dK \cap L}$.
From transversality, these components vary continuously around ${r=1}$, hence $g_{K,L}'(1)$ is also a sum of
values coming from the different components.
Therefore we can write
\begin{align*}
|K \cap L| \cdot & [g_{K,L}(1) + g_{K,L}'(1)] = |K \cap L| \cdot \sum _{i=1}^{n} \left[ g_{K^{(i)},L^{(i)}}(1) + g_{K^{(i)},L^{(i)}}'(1) \right] \\
  & \le \sum _{i=1}^{n} |K^{(i)} \cap L^{(i)}| \cdot \left[ g_{K^{(i)},L^{(i)}}(1) + g_{K^{(i)},L^{(i)}}'(1) \right] \\
  & \underset{\text{by} B(K^{(i)},L^{(i)})}{\le} \sum _{i=1}^{n} g_{K^{(i)},L^{(i)}}(1)^2 \le \left( \sum _{i=1}^{n} g_{K^{(i)},L^{(i)}}(1) \right) ^2 = g_{K,L}(1)^2
\qedhere
\end{align*}
\end{proof}

\begin{lemma}
If $B(K, L)$ holds for all pairs of parallelograms $(K,L) \in \cF$, then Theorem~\ref{thm:main} follows.
\end{lemma}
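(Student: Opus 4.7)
The plan is to combine the previous lemma with a finer decomposition, this time at the interior vertices of the polylines $S_i$. By the previous lemma it suffices to establish $B(K^{(i)}, L^{(i)})$ for each $i$. In these pairs $L^{(i)}$ is already a parallelogram by construction, so the remaining task is to reduce to the case where $K^{(i)}$ is also a parallelogram. When $S_i$ is a single segment, $K^{(i)}$ (intersected with the bounding strip $A$) is already a parallelogram and the hypothesis applies directly; the issue arises only when $S_i$ passes through vertices of $K$ that lie in the interior of $L^{(i)}$.

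In that case write $S_i = T_1 \cup \cdots \cup T_k$, with $T_j$ a single segment of outward normal $\nu_j$, meeting $T_{j-1}$ and $T_{j+1}$ at interior vertices $v_{j-1}, v_j$ of $K$. For each $j$ I would construct a parallelogram pair $(\tilde K_j, \tilde L_j) \in \cF$ that isolates the contribution of $T_j$: take $\tilde K_j$ to be the strip $\{y : |\langle \nu_j, y\rangle| \le h_K(\nu_j)\}$, truncated by a large bounding strip so that the line through $T_j$ lies on $\partial \tilde K_j$; take $\tilde L_j$ to be a centrally-symmetric parallelogram with two sides along the lines $\langle \nu_j, y\rangle = \pm h_K(\nu_j)$, and the other two sides arranged so that the portion of $\partial \tilde K_j \cap \tilde L_j$ contributing to the decomposition reproduces $T_j \cup -T_j$. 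By central symmetry the second pair of sides is naturally perpendicular to $v_{j-1}+v_j$, passing through $\pm v_{j-1}$ and $\pm v_j$.

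With such pairs in hand, the inequality chain of the previous lemma applies verbatim: the quantities $g_{K^{(i)}, L^{(i)}}(1)$ and $g'_{K^{(i)}, L^{(i)}}(1)$ split as sums over the segments $T_j$, each summand matching $g_{\tilde K_j, \tilde L_j}(1)$ and its derivative; the termwise inequality $|K^{(i)} \cap L^{(i)}| \le |\tilde K_j \cap \tilde L_j|$ follows from $\tilde K_j \cap \tilde L_j \supset K^{(i)} \cap L^{(i)}$; the hypothesized $B(\tilde K_j, \tilde L_j)$ bounds each term by $g_{\tilde K_j, \tilde L_j}(1)^2$; and $\sum a_j^2 \le (\sum a_j)^2$ closes the chain, yielding $B(K^{(i)}, L^{(i)})$ and hence $B(K,L)$.

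The main obstacle I anticipate is the explicit construction of $\tilde L_j$: the requirement that $\tilde K_j \cap \tilde L_j$ contain $K^{(i)} \cap L^{(i)}$ competes with the requirement that the boundary intersection match $T_j$ exactly on length and normal data. Because $L^{(i)}$ may extend further in the $\nu_j$-direction than $h_K(\nu_j)$, one cannot simply take $\tilde L_j \supset L^{(i)}$ as a subset of $\tilde K_j$; the second pair of sides of $\tilde L_j$ must be positioned carefully so that $K^{(i)} \cap L^{(i)} \subset \tilde K_j \cap \tilde L_j$ holds even though neither $\tilde K_j \supset L^{(i)}$ nor $\tilde L_j \supset L^{(i)}$ individually. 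This technical point, along with verifying $(\tilde K_j, \tilde L_j) \in \cF$ (transversality at the new intersection points), is where the proof will require the most care.
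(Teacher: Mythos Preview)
You are working much harder than necessary, and the missing idea is the symmetry of property $B$. Since $f_{S,T}(t)=e^{2t}f_{T,S}(-t)$, log-concavity of one is equivalent to log-concavity of the other, so $B(S,T)\iff B(T,S)$ for every $(S,T)\in\cF$. The paper exploits this by applying the previous lemma \emph{twice}. First apply it to $(K,L)$: it suffices to prove $B(K^{(i)},L^{(i)})$, and each $L^{(i)}$ is already a parallelogram. Now swap roles and apply the previous lemma to the pair $(L^{(i)},K^{(i)})$. Because $L^{(i)}$ is a parallelogram, every connected component of $\partial L^{(i)}\cap K^{(i)}$ is a single segment, so the construction yields pairs $\bigl((L^{(i)})^{(j)},(K^{(i)})^{(j)}\bigr)$ in which \emph{both} shapes are parallelograms. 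The hypothesis gives $B\bigl((L^{(i)})^{(j)},(K^{(i)})^{(j)}\bigr)$, the previous lemma gives $B(L^{(i)},K^{(i)})$, and symmetry of $B$ returns $B(K^{(i)},L^{(i)})$. No new construction is needed.

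Your direct segment-by-segment plan, by contrast, has a genuine gap beyond the containment issue you already flag. As written, two sides of $\tilde L_j$ lie on the lines $\langle\nu_j,y\rangle=\pm h_K(\nu_j)$, which are exactly the boundary lines of the strip $\tilde K_j$; then $\partial\tilde K_j$ and $\partial\tilde L_j$ overlap on whole segments and $(\tilde K_j,\tilde L_j)\notin\cF$. More seriously, the identity $g'_{K^{(i)},L^{(i)}}(1)=\sum_j g'_{\tilde K_j,\tilde L_j}(1)$ cannot be arranged with parallelogram pairs. At an interior vertex $v_j$ of $S_i$ the endpoint of $T_j$ moves as $r v_j$ under dilation of $K^{(i)}$, while in a transversal pair the corresponding endpoint slides along a fixed side of $\tilde L_j$; matching the induced rate of change of the length of $T_j$ forces that side to have direction $v_j$, i.e.\ to lie on the line through the origin. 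A centrally-symmetric parallelogram cannot have a side through the origin without degenerating, so no admissible $(\tilde K_j,\tilde L_j)\in\cF$ reproduces the derivative contribution at interior vertices, and your chain fails at the step where the derivatives are split.
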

\begin{proof}
Let $(K, L) \in \cF$ be any polygons.
Construct the sequence of pairs $K^{(i)}, L^{(i)}$ from $K,L$.
The shape $L^{(i)}$ is a parallelogram for every $i$.
Then construct the pairs $\left(L^{(i)}\right)^{(j)}, \left(K^{(i)}\right)^{(j)}$ from $L^{(i)},K^{(i)}$, for all $i$.
The shapes $\left(L^{(i)}\right)^{(j)}$ and $\left(K^{(i)}\right)^{(j)}$ will be parallelograms for every $i,j$.
Under our assumption, we have $B\left(\left(L^{(i)}\right)^{(j)}, \left(K^{(i)}\right)^{(j)}\right)$.
From this and the previous lemma, $B(L^{(i)},K^{(i)})$ follows.

The property $B$ is symmetric in the shapes.
That is, $B(S, T) \iff B(T, S)$ for all $(S, T) \in \cF$.
This is since $f_{S,T}$ and $f_{T,S}$ differ by a log-linear factor:
$$ f_{S,T}(t) = |e^t S \cap T| = e^{2t} f_{T,S}(-t) $$

This means that we have $B(K^{(i)},L^{(i)})$ as well.
Applying the previous lemma again gives $B(K, L)$.
\end{proof}

All that remains in order to deduce Theorem~\ref{thm:main} is to analyse the case of centrally-symmetric
parallelograms.

If $K,L$ are parallelograms and $K = T Q$ where $T$ is an invertible linear map and $Q = [-1,1] \times [-1,1]$,
$$ f_{K,L} = \det T \cdot f_{Q,T^{-1}L} . $$

Therefore we can take one of the parallelograms to be a square.
In other words, establishing $B(Q, L)$ where $Q$ is the unit square and
$L$ is a parallelogram, and $(Q, L) \in \cF$, will imply Theorem~\ref{thm:main}.

In fact, we may place additional geometric constraints on the square and the parallelogram.

If neither $Q$ nor $L$ contains a vertex of the other quadrilateral in its interior,
then $\dQ \cap L$ has 4 connected components.
Applying the reduction above to $Q, L$ gives $Q^{(i)},L^{(i)}$ with $i=1,2$,
and the intersection $\dQ^{(i)} \cap L^{(i)}$ has only 2 connected
components, as remarked above.

Since the shapes are convex, if all the vertices of one shape are contained in the other, we have
$Q \subset L$ or $L \subset Q$, and then (\ref{eq:derivs}) holds trivially.
If $L$ contains corners of $Q$ but $Q$ does not contain vertices of $L$, we shall swap them.

These arguments leave two cases to be considered:
\begin{itemize}
\item $Q$ contains 2 vertices of $L$, and $L$ does not contain corners of $Q$. In this case the
intersection $\dQ \cap L$ is contained in two opposite edges of $Q$.
\item $Q$ contains 2 vertices of $L$, and $L$ contains 2 corners of $Q$. In this case the intersection
$\dQ \cap L$ is a subset of the edges around these corners of $Q$.
\end{itemize}

\subsection*{Computation of the special cases}

These cases are defined by 4 real parameters -- the coordinates of the vertices of $L$.
A symbolic expression for $f(t)$ can be derived, and (\ref{eq:derivs}) will be a polynomial inequality in
these parameters.
The geometric conditions given above are also polynomial inequalities in these parameters.
Thus each of the two cases can each be expressed by a universally-quantified formula in
the language of real closed fields.
By Tarski's theorem \cite{tarski}, this first order theory has a decision procedure.
This is implemented in the QEPCAD B computer program \cite{qepcad}.
Relevant computer files, for generation of the symbolic condition and for running the logic solver,
for one of the two cases above, are available at

{\smaller \url{http://www.tau.ac.il/~livnebaron/files/bconj_201311/bconj_corners.mac} }

{\smaller \url{http://www.tau.ac.il/~livnebaron/files/bconj_201311/bconj_qelim.txt} }

A human-readable proof of both cases is included here as well.

\begin{lemma}
If $L$ is a centrally-symmetric parallelogram that satisfies $(Q, L) \in \cF$,
and if $L$ crosses $Q$ only inside the vertical edges of $Q$, then $B(Q, L)$.
\end{lemma}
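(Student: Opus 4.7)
My plan is to parametrize the configuration explicitly, compute $g_{Q,L}(1)$, $g'_{Q,L}(1)$ and $|Q\cap L|$ as closed-form expressions in those parameters, and then verify the inequality (\ref{eq:derivs}) by a short case analysis on the sign of $g(1)+g'(1)$.

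By central symmetry, $\dQ \cap L$ consists of two segments on the vertical edges of $Q$. I would denote the endpoints on the right edge by $(1,a_1),(1,a_2)$ with $a_1<a_2$ (so that the left segment has endpoints $(-1,-a_2),(-1,-a_1)$). Let $m_1>m_2$ be the slopes of the two edges of $L$ through $(1,a_1)$ and $(1,a_2)$ respectively; the inequality ensures that the rightmost vertex of $L$ lies to the right of $Q$, while the remaining pair $\pm v_1$ of vertices of $L$ lies inside $Q$ (since $L$ crosses only vertical edges, the two edges of $L$ meeting at $v_1$ would otherwise have to exit through a horizontal edge of $Q$). Throughout I will use the symmetric variables $p=\tfrac{a_1+a_2}{2}$, $q=\tfrac{a_2-a_1}{2}>0$, $\bar m=\tfrac{m_1+m_2}{2}$, $\sigma=\tfrac{m_1-m_2}{2}>0$.

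Because the outward normal to $Q$ on its vertical edges is $(\pm1,0)$, the integrand $h_Q(\nu_Q(\cdot))$ equals $1$ on the relevant arcs, so $g_{Q,L}(r)$ is simply twice the length of $\{x=r\}\cap L$. For $r$ near $1$ this length is $(a_2-a_1)+(m_2-m_1)(r-1)$, giving $g(1)=4q$ and $g'(1)=-4\sigma$. For the area, $Q\cap L$ is a centrally-symmetric hexagon whose six vertices are the four crossings $(\pm1,\pm a_i)$ together with the pair $\pm v_1$; solving for $v_1$ as the intersection of the lines through $(1,a_1)$ and $(-1,-a_2)$ with slopes $m_1,m_2$, and applying the shoelace formula (central symmetry halves the sum), one obtains after simplification
$$|Q\cap L| \;=\; 4q+2\sigma-\frac{2(p-\bar m)^2}{\sigma}.$$

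With these formulas, the $B$-inequality (\ref{eq:derivs}) reads
$$\Bigl(4q+2\sigma-\tfrac{2(p-\bar m)^2}{\sigma}\Bigr)(4q-4\sigma)\;\le\;16q^2.$$
If $\sigma\ge q$, then $4q-4\sigma\le 0$ while the area factor is non-negative, so the left side is non-positive and the inequality is trivial. If $\sigma<q$, expanding the left side yields $16q^2-8q\sigma-8\sigma^2-\tfrac{8(p-\bar m)^2(q-\sigma)}{\sigma}$, and each of the three subtracted terms is non-negative under $0<\sigma<q$, establishing the bound.

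The main obstacle is the clean computation of $|Q\cap L|$: without the central-symmetry reduction of the shoelace sum and the passage to the $(p,q,\bar m,\sigma)$ coordinates, the area is an unwieldy rational expression and the two-case dichotomy is not apparent. Once the compact formula is in hand, the inequality falls out mechanically; in particular, the only geometric input beyond the formulas is non-negativity of the area, and the explicit containment $v_1\in Q$ never needs to be invoked.
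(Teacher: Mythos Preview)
Your argument is correct. Both your proof and the paper's follow the same overall plan---parametrize the configuration, compute $g(1)$, $g'(1)$, and the intersection area explicitly, and then verify the quadratic inequality (\ref{eq:derivs})---but the executions differ in a useful way. The paper parametrizes by the outer vertex $(c,d)$ of $L$ and the edge angles $\alpha,\beta$; it writes $|Q\cap L|=|L|-(c-1)^2(\cot\alpha-\cot\beta)$ and then invokes a convexity bound $|L|\le (c/(c-1))^2\cdot\tfrac12(c-1)g(1)$, reducing the problem to the elementary inequality $c^2/(c-1)^2\le 1+2/(c-2)$ for $c>2$. You instead parametrize by the crossing heights and slopes and compute $|Q\cap L|$ directly via the shoelace formula for the hexagon; in the variables $(p,q,\bar m,\sigma)$ the $B$-inequality becomes $(4q+2\sigma-2(p-\bar m)^2/\sigma)(4q-4\sigma)\le 16q^2$, which is immediate after splitting on the sign of $q-\sigma$. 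Your route is a bit more streamlined because the exact area formula makes the extra convexity estimate on $|L|$ unnecessary. One small caveat: your closing remark that ``the explicit containment $v_1\in Q$ never needs to be invoked'' is true only of the final inequality check; the hexagonal shoelace computation itself relies on $\pm v_1\in Q$, which you correctly deduced earlier from the hypothesis that $L$ meets $\partial Q$ only in the vertical edges.
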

\begin{proof}
Let $\alpha, \beta, c, d$ be as in Figure~\ref{fig:lemma1}.
\begin{figure}[h!]
\includegraphics[width=0.75\textwidth]{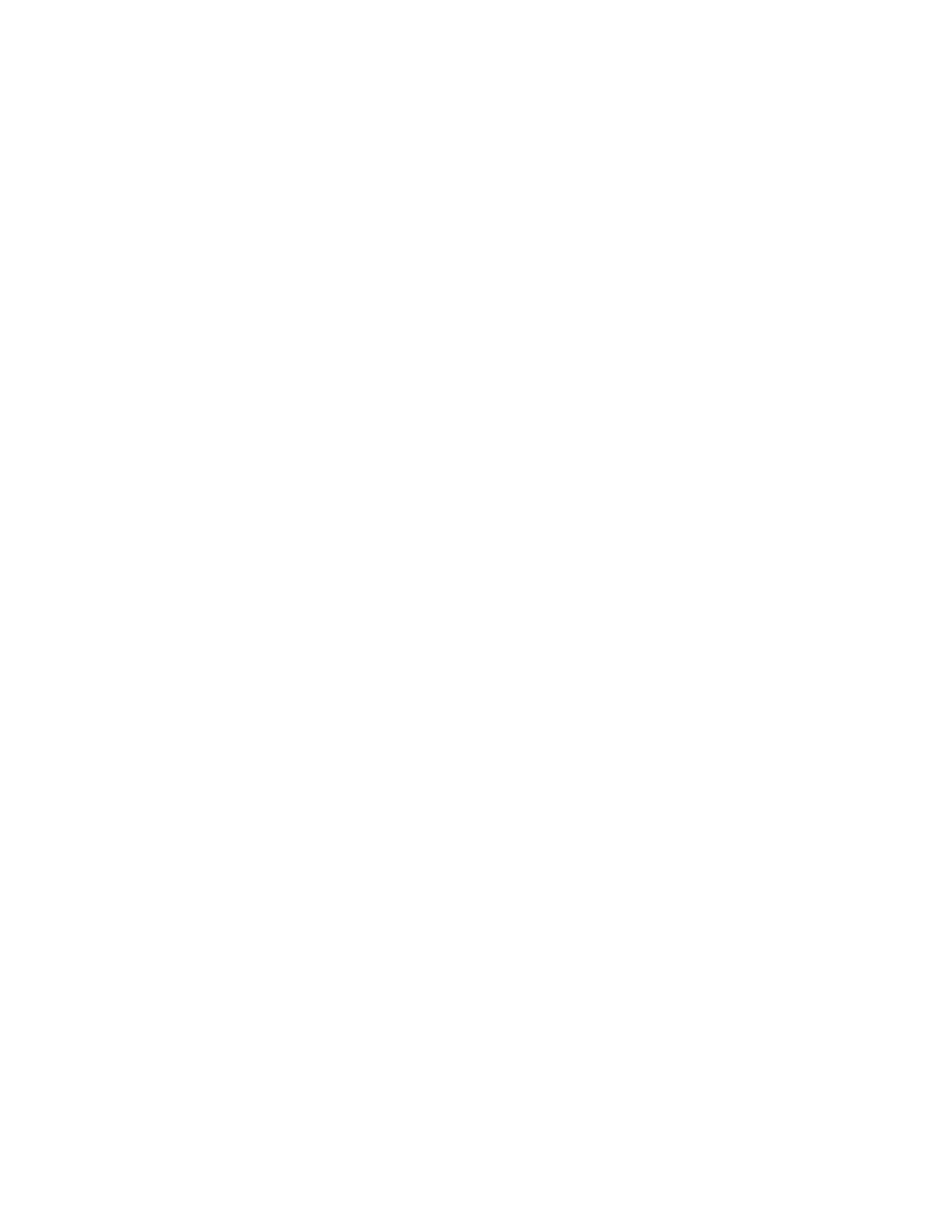}
\caption{}
\label{fig:lemma1}
\end{figure}

The equations for the edges of $L$ are
\begin{align*}
x \cos \alpha + y \sin \alpha & = \pm (c \cos \alpha + d \sin \alpha) \\
x \cos \beta  + y \sin \beta & = \pm (c \cos \beta + d \sin \beta)
\end{align*}
Relevant parameters are computed as follows:
\begin{align*}
\dQ \cap \dL & = \left\{ \pm \big( 1, (c-1) \cot \alpha + d \big) , \pm \big( 1, (c-1) \cot \beta + d \big) \right\} \\
g_{Q,L}(1) & = 2(c-1)(\cot \alpha - \cot \beta) \\
g_{Q,L}'(1) & = -2(\cot \alpha - \cot \beta) \\
g_{Q,L}(1) + g_{Q,L}'(1) & = (2c-4)(\cot \alpha - \cot \beta)
\end{align*}

The area of $L$ is comprised of $Q \cap L$ and of two triangles.
The area of the triangles is $\frac{1}{2}g(1) \cdot (c-1)$ so
$$ |Q \cap L| = |L| - (c-1)^2 (\cot \alpha - \cot \beta) . $$

Note that $0 < \alpha < \frac{\pi}{2} < \beta < \pi$ so $\cot \alpha - \cot \beta$ is a positive quantity,
and that if $c < 2$ the value of $g(1) + g'(1)$ is negative so inequality (\ref{eq:derivs}) is satisfied
immediately.

Assume $c > 2$ from now on.
What we need to prove is
$$ (2c-4)(\cot \alpha - \cot \beta) \cdot \big[ |L| - (c-1)^2 (\cot \alpha - \cot \beta) \big] \le 4(c-1)^2(\cot \alpha - \cot \beta)^2 . $$
Or equivalently
$$ (2c-4)|L| \le (c-1)^2 (\cot \alpha - \cot \beta) \cdot ( 4 + 2c-4 ) , $$
or still
$$ |L| \le \left(1 + \tfrac{2}{c-2}\right) \cdot \tfrac{1}{2}(c-1)g(1) . $$

The amount $\frac{1}{2}(c-1) g(1)$ is the area of the triangles $L \backslash Q$.
By convexity the area of $L$ cannot be larger than that times $\left(\frac{c}{c-1}\right)^2$.
It remains to verify that for $c>2$, $\frac{c^2}{(c-1)^2} < 1 + \frac{2}{c-2}$. This is a
simple exercise in algebra:
$$ \frac{c^2}{(c-1)^2} = 1 + \frac{2c-1}{(c-1)^2}
= 1 + \tfrac{2}{c-2} \cdot \frac{(c-\tfrac{1}{2})(c-2)}{(c-1)^2}
= 1 + \tfrac{2}{c-2} \left[ 1 - \frac{c/2}{(c-1)^2} \right]
\le 1 + \tfrac{2}{c-2} $$
\end{proof}

\begin{lemma}
If $L$ is a centrally-symmetric parallelogram that satisfies $(Q, L) \in \cF$,
and each of $Q, L$ contains two vertices of the other, then $B(Q, L)$.
\end{lemma}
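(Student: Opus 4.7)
My plan is to mirror the previous lemma: set up four real parameters, compute the ingredients of~\eqref{eq:derivs}, and verify the resulting polynomial inequality. By central symmetry, and after reflecting if necessary, I assume $L$ contains the opposite corners $(1,1)$ and $(-1,-1)$ of $Q$ in its interior. Then $\dQ \cap L$ has two $L$-shaped components, one wrapping each of those corners. Near $(1,1)$, let $\dL$ meet $\dQ$ at $(a,1)$ on the top edge and $(1,b)$ on the right edge, with $-1 < a, b < 1$; and let $w = (1+p, 1+q)$, with $p, q > 0$, be the vertex of $L$ lying just outside $Q$ between those two crossings. The two edges of $L$ meeting at $w$ then have slopes $m = q/(1+p-a)$ and $n = (1+q-b)/p$ with $0 < m < n$, and
\[ L = \{(x, y) : |y - mx| \le 1 - ma,\ |y - nx| \le n - b\}. \]

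A direct calculation, using that $h_Q(\nu_Q(x)) = 1$ at every smooth point of $\dQ$, yields
\[ g(1) = 2(2 - a - b), \quad g'(1) = 4 - 2n - \tfrac{2}{m}, \quad |L| = \tfrac{4(1 - ma)(n - b)}{n - m}, \]
and a shoelace computation for each wing quadrilateral of $L \setminus Q$ gives $|Q \cap L| = |L| - (1-a)q - (1-b)p$. Eliminating $p$ and $q$ via the parallelogram relations (giving $p = (m(1{-}a)+(1{-}b))/(n-m)$ and $q = m(n(1{-}a)+(1{-}b))/(n-m)$) turns the condition $|Q \cap L|(g(1)+g'(1)) \le g(1)^2$ into a polynomial inequality in the four variables $a, b, m, n$, subject to the natural range constraints.

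It remains to verify this inequality. The subregion where $g(1) + g'(1) \le 0$ is immediate, as the right-hand side of~\eqref{eq:derivs} is then non-negative while the left-hand side is non-positive. In the complementary regime I would follow the strategy of the previous lemma: rearrange~\eqref{eq:derivs} as an upper bound on $|L|$ in terms of $g(1)$ and the wing area, and attempt to deduce it from convexity of $L$. The main obstacle is that each wing is now a \emph{quadrilateral} with an apex $w$ free to move in two dimensions, so the one-parameter convexity estimate $c^2/(c-1)^2 \le 1 + 2/(c-2)$ used before must be replaced by a genuinely two-parameter analogue; producing a clean such bound is where I expect the real work to lie. If no short geometric argument presents itself, the claim is at any rate a quantifier-free polynomial inequality over the reals, and is settled by Tarski's theorem via the QEPCAD verification referenced in the paragraph preceding the previous lemma.
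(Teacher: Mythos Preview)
Your setup and formulas are correct: the parametrisation by $a,b,m,n$, the values $g(1)=2(2-a-b)$, $g'(1)=4-2n-2/m$, $|L|=4(1-ma)(n-b)/(n-m)$, the shoelace computation of the wing area, and the expressions for $p,q$ all check out. The reduction of~\eqref{eq:derivs} to a quantifier-free polynomial condition in four real variables is therefore valid, and the appeal to Tarski/QEPCAD is legitimate in principle (the paper offers that route as well). But as a written proof this is incomplete: you neither carry out the decision procedure nor supply the geometric argument you gesture at, and you yourself identify the two-parameter freedom of the wing apex $w$ as the obstacle.

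The paper's human-readable argument removes exactly that obstacle, and it does so by looking at the \emph{other} vertex of $L$. Instead of your exterior vertex $w$ and the wing quadrilateral of $L\setminus Q$, the paper works with the vertex $p$ of $L$ lying \emph{inside} $Q$, together with the chord (``dotted line'') joining the two crossings near the corner of $Q$ \emph{not} contained in $L$. That chord has length $(a^2+b^2)^{1/2}$ in the paper's normalisation, and the triangle it spans with apex $p$ has area $\tfrac12\bigl(S-(4-ab)\bigr)$ where $S=|Q\cap L|$; moreover its apex angle at $p$ is exactly $\pi-(\beta-\alpha)$, the difference of the two edge-angles that determine $g'(1)=4+2\tan\alpha+2\cot\beta$. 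For fixed $\beta-\alpha$ the map $\alpha\mapsto\tan\alpha+\cot(\alpha+c)$ is concave and symmetric, so is maximised at the midpoint; and for fixed base and height the apex angle of a triangle is largest when the triangle is isoceles. Combining these two elementary optimisations yields an upper bound for $g'(1)$ depending only on $a,b,S$, and the inequality~\eqref{eq:derivs} becomes a three-variable condition that is \emph{quadratic} in $S$ and is dispatched by checking its value and first two $S$-derivatives at the endpoint $S=4-ab$. Your wing quadrilateral has no comparably clean link between its shape and $g'(1)$; switching attention from $w$ to $p$ is the missing idea.
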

\begin{proof}
Let $a$ and $b$ be as in Figure~\ref{fig:lemma2}, and let $S$ stand for the area $S = |Q \cap L|$.
The numbers $a$ and $b$ are in the range $0 < a, b < 2$, and $\alpha$ and $\beta$ satisfy
$\frac{1}{2} \pi < \alpha < \beta < \pi$.
The area $S$ is in the range $4 - ab < S < 4$.

\begin{figure}[h!]
\includegraphics[width=0.6\textwidth]{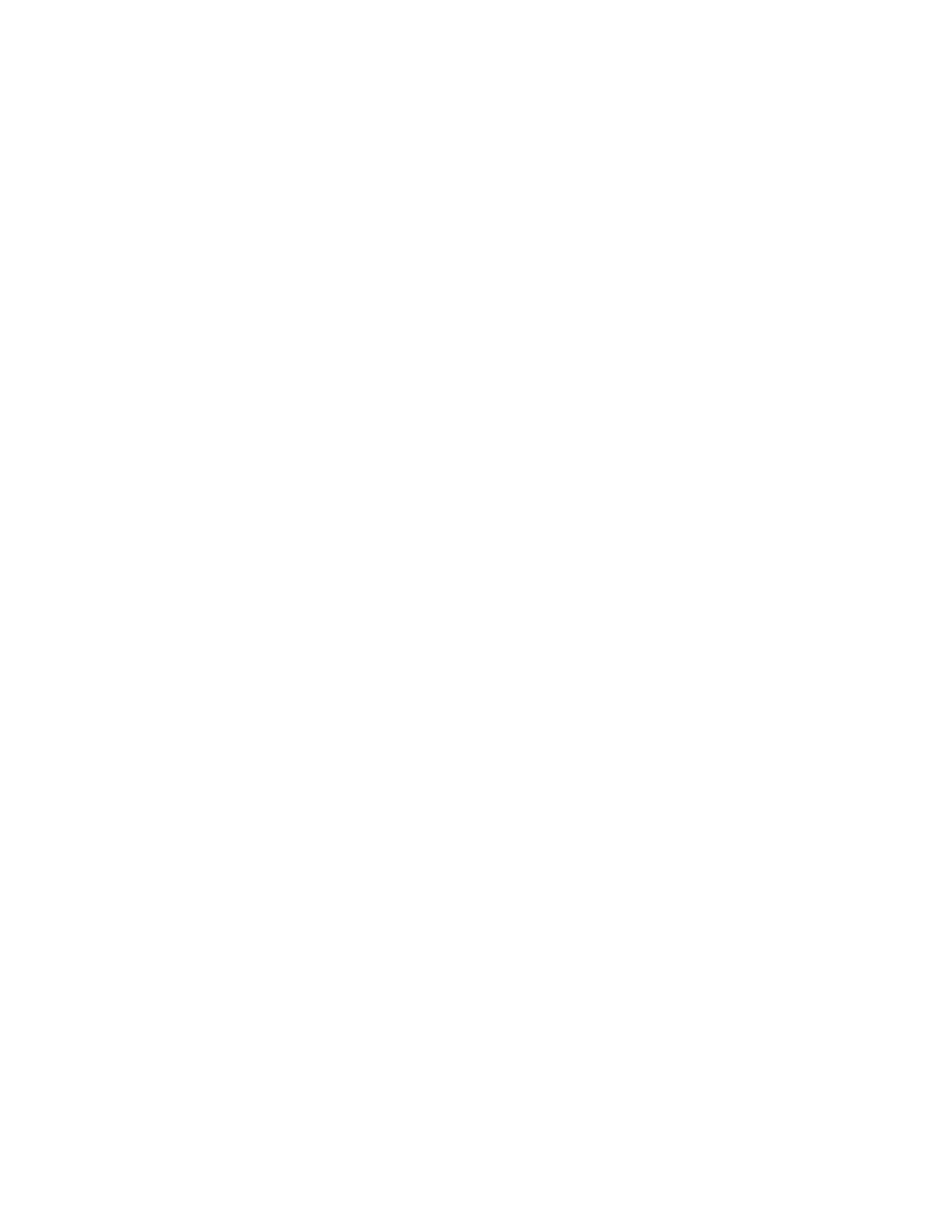}
\caption{}
\label{fig:lemma2}
\end{figure}

The quantity $g(1)$ is simply $8 - 2a - 2b$, and $g'(1)$ will soon be shown to be bounded by
$$ g'(1) \le -8 \frac{ S - (4 - ab) } { (4 - S) + \frac{1}{2} (a-b)^2 } . $$

This gives an inequality in the 3 variables $a, b, S$, which will be proved for values in the prescribed ranges.

The length of each dotted line in Figure~\ref{fig:lemma2} is $(a^2 + b^2)^{1/2}$.
Denoting the height of the triangle (the distance between $p$ and the closest dotted line) by $h$,
the area is
$$ S = (4 - ab) + 2 \cdot \frac{1}{2} h \cdot (a^2 + b^2)^{1/2} , $$
so
$$h = \frac{S - (4 - ab)}{(a^2 + b^2)^{1/2}} . $$

The formula for $g'(1)$ in terms of the angles $\alpha, \beta$ is
$$ g'(1) = 4 + 2 \tan \alpha + 2 \cot \beta . $$

Denote $c = \beta - \alpha$.
Holding $c$ fixed, the function
$$ \alpha \mapsto g'(1) = 4 + 2 \tan \alpha + 2 \cot (\alpha + c) $$
is concave and takes the same value for $\alpha$ as for $\frac{3}{2}\pi - c - \alpha$.
Therefore its maximum is attained at $\alpha = \frac{3}{4}\pi - \frac{1}{2}c$.
This gives a bound for $g'(1)$ for a given $c = \beta - \alpha$:
$$ g'(1) \le 4 + 2 \tan \left( \tfrac{3}{4}\pi - \tfrac{1}{2}c \right) + 2 \cot \left( \tfrac{3}{4}\pi + \tfrac{1}{2}c \right) . $$
This bound is stronger for higher values of $c$, since $\tan$ is an increasing function and $\cot$ is
a decreasing function.

The angle between the edges of $L$ meeting at $p$ is $\pi - (\beta - \alpha) = \pi - c$.
When $a$, $b$, and $h$ are kept fixed, the position of $p$ gives a bound for $g'(1)$.
This bound is the weakest when the angle $\pi - c$ is largest.
Simple geometric considerations show that in a family of triangles with the same base and height,
the apex angle is largest when the triangle is isosceles, so we will pursue the case where the triangle
formed by $p$ and the nearest dotted line is isosceles.

The value of $c$ in this case is $c = 2 \tan^{-1} \frac{\frac{1}{2}(a^2 + b^2)^{1/2}}{h}$,
and we get
\begin{align*}
g'(1) & \le 4 + 2 \tan \left( \tfrac{3}{4}\pi - \tfrac{1}{2}\pi + \tan^{-1} \frac{\frac{1}{2}(a^2 + b^2)^{1/2}}{h} \right)
                      + 2 \cot \left( \tfrac{3}{4}\pi + \tfrac{1}{2}\pi - \tan^{-1} \frac{\frac{1}{2}(a^2 + b^2)^{1/2}}{h} \right) \\
& = 4 + 4 \tan \left( \tfrac{1}{4}\pi + \tan^{-1} \frac{1}{2} \frac{a^2 + b^2}{S - (4 - ab)} \right) \\
& = 4 + 4 \cdot \frac{ 1 + \frac{1}{2} \frac{a^2 + b^2}{S - (4 - ab)} }
                                { 1 - \frac{1}{2} \frac{a^2 + b^2}{S - (4 - ab)} }
   = \frac{8}{1 - \frac{1}{2} \frac{a^2 + b^2}{S - (4 - ab)}}
   = -8 \frac{ S - (4 - ab) } { (4 - S) + \frac{1}{2} (a-b)^2 } ,
\end{align*}
which proves the forementioned bound for $g'(1)$.

Therefore, to prove (\ref{eq:derivs}) it is enough to show
$$ S \cdot \left( 8 - 2a - 2b -8 \frac{ S - (4 - ab) } { (4 - S) + \frac{1}{2} (a-b)^2 } \right) \le (8 - 2a - 2b)^2 $$

Rearranging and taking into account that $S < 4$, this is equivalent to
$$ \underbrace{ (8 - 2a - 2b)(8 - 2a - 2b - S)\left( (4 - S) + \tfrac{1}{2} (a-b)^2 \right) + 8S(S - (4 - ab)) }_E \ge 0 $$

When $a$ and $b$ are held fixed, this is a 2\textsuperscript{nd} degree condition on $S$.
Since ${0 < a, b < 2}$, the value and the first two derivatives in the point $S = 4 - ab$ are positive:
\begin{align*}
E |_{S=4-ab} & = (8 - 2a - 2b)(2-a)(2-b)\cdot\tfrac{1}{2}(a^2+b^2) > 0 \\
\left. \frac{\partial E}{\partial S} \right|_{S=4-ab} & = (a+b) \left( (5 - a - b)^2 - 1 \right) + 2(a-b)^2 > 0 \\
\left. \frac{\partial^2 E}{\partial S^2} \right|_{S=4-ab} & = 18(4 - ab) > 0
\end{align*}

This means that the condition stays true for all $S > 4 - ab$, as required.
\end{proof}

\section{Dihedral symmetry}
\label{sec:dihedral}

This section deals with dihedrally symmetric sets.
The group $D_n$ is defined in the introduction.

\begin{theorem*}[\ref{thm:dihedral}]
Let $n \ge 2$ be an integer, and let $K,L \subset \RR^2$ be $D_n$-symmetric convex shapes.
Then $t \mapsto |e^t K \cap L|$ is a log-concave function.
\end{theorem*}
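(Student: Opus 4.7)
The proof of Theorem~\ref{thm:dihedral} would adapt the framework of Section~\ref{sec:main}, and splits naturally according to the parity of $n$.

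When $n$ is even, the rotation by $\pi=(n/2)(2\pi/n)$ already belongs to $D_n$, so every $D_n$-symmetric convex shape is automatically centrally symmetric, and Theorem~\ref{thm:dihedral} follows immediately from Theorem~\ref{thm:main}. The substantive case is therefore odd $n\ge 3$, for which a $D_n$-symmetric shape need not equal its reflection through the origin---an equilateral triangle is $D_3$-symmetric but not centrally symmetric.

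For odd $n$, my plan is to re-run the three-step reduction of Section~\ref{sec:main}. First, let $\cF_n\subset\cF$ denote the subclass of transversely-intersecting polygon pairs $(K,L)$ with both shapes $D_n$-symmetric; the opening Hausdorff-density argument of Section~\ref{sec:main} applies verbatim to show that $\cF_n$ is dense in the space of pairs of $D_n$-symmetric convex shapes, so it is enough to establish property $B$ for $(K,L)\in\cF_n$. Second, partition the components of $\dK\cap L$ into $D_n$-orbits $\mathcal{O}_1,\ldots,\mathcal{O}_m$ (generically of size $2n$, smaller if a component meets a reflection axis), and for each orbit build the $D_n$-equivariant extension $(K^{(i)},L^{(i)})$ by mimicking the construction in Section~\ref{sec:main}: $K^{(i)}$ is the largest $D_n$-symmetric convex shape whose boundary contains $\mathcal{O}_i$, and $L^{(i)}$ is the $D_n$-symmetric polygon bounded by the tangent lines to $\dL$ at the endpoints of the components in $\mathcal{O}_i$ (playing the role that the parallelogram $L^{(i)}$ played in Section~\ref{sec:main}). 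The sum-of-squares inequality $\sum g_i^2\le(\sum g_i)^2$ is symmetry-agnostic, so it applies verbatim and reduces the task to verifying $B(K^{(i)},L^{(i)})$ for each orbit individually.

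The main obstacle is the base case: establishing property $B$ for an odd-$n$ pair whose boundary intersection consists of a single $D_n$-orbit. Here $L^{(i)}$ is a $D_n$-symmetric $4n$-gon and $K^{(i)}$ is a $D_n$-symmetric $2n$-gon, so after rotating a reflection axis of $D_n$ to the $x$-axis the pair is parameterized by a handful of real numbers describing the orbit within a fundamental sector of angle $2\pi/n$, and property $B$ becomes a polynomial inequality in these parameters. Decidability by Tarski--Seidenberg (and in principle verifiability by QEPCAD) follows as in Section~\ref{sec:main}. A human-readable argument should adapt the bound on $g'(1)$ from the parallelogram analysis, replacing the isosceles-triangle symmetrization of the apex angle by its dihedral analog, and then handle a short list of geometric cases determined by how the orbit intersects the fundamental sector. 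I expect the hardest step to be the analog of the second parallelogram lemma (where each shape contains vertices of the other), since the $n$-dependence of the base-case angles makes the resulting polynomial inequality noticeably less transparent than in the square case.
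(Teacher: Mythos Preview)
Your proposal takes a very different route from the paper's, and the gap you yourself flag---the unverified base case for odd $n$---is real and would be substantially harder than you suggest.

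The paper does not re-run the polygonal reduction of Section~\ref{sec:main} at all. Instead it first passes (by Hausdorff approximation) to smooth strongly-convex $D_n$-symmetric shapes, observes that such a shape is determined by an even radial function $\rho_K$ of period $2\pi/n$, and then applies an \emph{angular rescaling}: set $\rho_{w(K)}(\theta)=\rho_K(\tfrac{2}{n}\theta)$. A one-line check of the curvature condition $\rho^2+2(\rho')^2-\rho\rho''>0$ shows that $w(K)$ is again strongly convex whenever $n\ge 2$, and $w(K)$ is now $D_2$-symmetric, i.e.\ unconditional. The pointwise map $(r,\theta)\mapsto(r,\tfrac{n}{2}\theta)$ from the fundamental sector $G_n$ onto $G_2$ has constant Jacobian $n/2$ and commutes with dilations $x\mapsto e^t x$, so $f_{K,L}(t)=f_{w(K),w(L)}(t)$ identically. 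This reduces the $D_n$ case for $n\ge 3$ directly to the unconditional case, which is Theorem~\ref{thm:uncond} from \cite{bconj}. Neither Theorem~\ref{thm:main} nor any new base-case computation is invoked.

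By contrast, your plan would require establishing property $B$ for a family of $D_n$-symmetric $2n$-gon/$4n$-gon pairs, one for each orbit type. Tarski--Seidenberg guarantees decidability for each fixed $n$, but you would need either a uniform-in-$n$ argument or infinitely many separate verifications; your sketch of a dihedral analog of the isosceles-triangle bound on $g'(1)$ is not carried out, and there is no evidence the resulting polynomial inequalities close as cleanly as the two parallelogram lemmas did. (A minor point: for odd $n$ the shapes are not centrally symmetric, so your $\cF_n$ is not literally a subset of $\cF$ as defined in Section~\ref{sec:main}, and the reduction lemma would need to be restated and re-proved in that setting.) The paper's angular-rescaling trick sidesteps all of this in a few lines.
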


For $n=2$ the group $D_n$ is generated by reflections across the standard axes.
This corresponds to unconditional sets and functions,
and Theorem~\ref{thm:uncond} from \cite{bconj} solves this case.

The proof for $n \ge 3$ is by reduction to the unconditional case.

A \emph{smooth strongly-convex} shape $K \subset \RR^2$ is one whose boundary is
a smooth curve with strictly positive curvature everywhere.
The radial function $\rho_K$ of a smooth strongly-convex shape $K \subset \RR^2$ is a smooth function.
The boundary $\dK$ is the curve
$$ \gamma_K(\theta) = \left( \rho_K(\theta) \cos \theta, \rho_K(\theta) \sin \theta \right) . $$
The convexity of $K$ is reflected in the sign of the curvature of $\gamma_K$.
Positive curvature can be written as a condition on the radial function:
\begin{align}
\rho(\theta)^2 + 2\rho'(\theta)^2 - \rho(\theta) \rho''(\theta) > 0 .
\label{eq:curv}
\end{align}

\begin{proof}[Proof of theorem~\ref{thm:dihedral}]

For any $D_n$-symmetric convex shape $K \subset \RR^2$ there is a series of $D_n$-symmetric convex shapes
whose boundaries are smooth and strongly convex curves, and whose Hausdorff limit is $K$.
By the continuity argument from the previous section, the general case follows from the smooth case.
From here on, $K$ and $L$ are smooth $D_n$-symmetric shapes.

$D_n$-symmetric shapes correspond to radial functions that are even and have period $\frac{2\pi}{n}$.
These shapes are completely determined by their intersection with the sector
$$ G_n = \left\{ (r \cos \theta, r \sin \theta) : r \ge 0 , \theta \in [0,\tfrac{\pi}{n}] \right\} . $$
Given two such shapes $K,L$ the area function is
$$ f_{K,L}(t) = |e^t K \cap L| = 2n f_{K \cap G_n, L \cap G_n}(t) . $$

Let $K \subset \RR^2$ be a $D_n$-symmetric strongly convex shape, and consider the function
$\tilde{\rho}(\theta) = \rho_K(\frac{2}{n} \theta)$.
This is an even function with period $\pi$.
The function $\tilde{\rho}(\theta)$ also satisfies (\ref{eq:curv}):
\begin{align*}
& \tilde{\rho}(\theta)^2 + \tilde{\rho}'(\theta)^2 - \tilde{\rho}(\theta) \tilde{\rho}''(\theta) = \\
& \qquad \tfrac{4}{n^2} \left( \rho_K(\tfrac{2}{n}\theta)^2 + 2\rho_K'(\tfrac{2}{n}\theta)^2 - \rho_K(\tfrac{2}{n}\theta) \rho_K''(\tfrac{2}{n}\theta) \right) + (1 - \tfrac{4}{n^2}) \rho_K(\tfrac{2}{n}\theta)^2 > 0 .
\end{align*}

This means that $\tilde{\rho}(\theta)$ is the radial function of some $D_2$-symmetric (unconditional)
strongly convex shape.
We denote this $w(K)$: the unique shape that satisfies $\rho_{w(K)}(\theta) = \rho_K(\frac{2}{n} \theta)$.

The following function, also named $w$, is defined on $G_n$:
\begin{align*}
w & \left( \begin{matrix} r \cos \theta \\ r \sin \theta \end{matrix} \right)
= \left( \begin{matrix} r \cos \tfrac{n}{2} \theta \\ r \sin \tfrac{n}{2} \theta \end{matrix} \right) .
\qquad \left( \text{for} \ r \ge 0, \ \theta \in [0,\tfrac{\pi}{n}] \right)
\end{align*}

The point function $w$ is an bijection between $G_n$ and $G_2$.
It relates to the shape function $w$ by the formula
$$ \{ w(x) : x \in K \cap G_n \} = w(K) \cap G_2 . $$
The point function $w$ is differentiable inside $G_n$, and has a constant Jacobian determinant $\frac{n}{2}$.

Hence
$$ f_{K,L}(t) = 2n f_{K \cap G_n,L \cap G_n}(t) = 4 f_{w(K) \cap G_2,w(L) \cap G_2}(t) = f_{w(K),w(L)}(t) , $$
and the theorem follows from the result in \cite{bconj}.

\end{proof}


\begin{thebibliography}{100}

\bibitem{latala}
R. Lata{\l}a.
\newblock On some inequalities for Gaussian measures.
\newblock \emph{Proceedings of the International Congress of Mathematicians}, vol II, 813--822, 2002.

\bibitem{bconj}
D. Cordero-Erausquin, M. Fradelizi and B. Maurey.
\newblock The (B) conjecture for the Gaussian measure of dilation of symmetric convex sets and related problems.
\newblock \emph{Journal of Functional Analysis}, 214(2):410--427, 2004.

\bibitem{tarski}
A. Tarski.
\newblock A Decision Method for Elementary Algebra and Geometry.
\newblock \emph{University of California Press}, 1951.

\bibitem{qepcad}
C. W. Brown.
\newblock QEPCAD B: a program for computing with semi-algebraic sets using CADs.
\newblock \emph{SIGSAM Bulletin}, 37(4): 97--108, 2003.

\end{thebibliography}
\end{document}